\newtheorem{lemma}{Lemma}[section]
\newtheorem{theorem}[lemma]{Theorem}
\newtheorem{corollary}[lemma]{Corollary}
\newtheorem{remark}{Remark}
\theoremstyle{definition}
\definecolor{re}{rgb}{1,0.2,0.2}           
\definecolor{gr}{rgb}{0,1,0}
\definecolor{bl}{rgb}{0,0,0.6}
\definecolor{bl2}{rgb}{0,1,0}
\date{{\scriptsize Version 3 September 2018, printed \today}}
\begin{document}

\title[Graphs determined by their generalized spectrum]{New families of graphs determined by their generalized spectrum
}
\author{Fenjin Liu, Johannes Siemons, Wei Wang}

\address{F. Liu: School of Science, Chang'an University, Xi'an, P.R. China, 710046\newline
School of Mathematics and Statistics, Xi'an Jiaotong University, Xi'an, P.R. China, 710049\newline
School of Mathematics, University of East Anglia,  Norwich, Norfolk, NR4 7TJ, UK.}
\email{fenjinliu@yahoo.com}

\address{J. Siemons: School of Mathematics, University of East Anglia, Norwich, NR4 7TJ, UK}
\email{j.siemons@uea.ac.uk}

\address{W. Wang: School of Mathematics and Statistics, Xi'an Jiaotong University, Xi'an, P.R. China, 710049}
\email{
    wang\_weiw@163.com}

\maketitle

{\sc Abstract:}\,   We construct infinite families of graphs that are determined by their generalized spectrum. This construction is based on new formulae for the determinant of the walk matrix of a graph. The graphs constructed here all satisfy a lower divisibility for the determinant of their walk matrix.\footnote{{\sc Keywords:}\, Graph spectrum, Walk matrix, Graphs determined by generalized spectrum ~~~~\\
{\sc Mathematics Subject Classification:} 05C50, 15A18}

\section{ Introduction}

Let $G$ be a graph on the vertex set $V$ with adjacency matrix $A$ and let $\overline{G}$ be its complement, with adjacency matrix $\overline{A}.$ Then the {\it spectrum} ${\rm spec}(G)$ of $G$ is the set of  all eigenvalues of $A,$ with corresponding multiplicities. It has been a longstanding problem to characterize graphs that are determined by their spectrum, that is to say: If  ${\rm spec}(H)={\rm spec}(G),$ does it follow that $H$ is isomorphic to $G?$ If this is the case then $G$ is {\it determined by its spectrum,} or a DS graph for short. In~\cite{Dam-Wga,Haemers-AlmostDS,Godsil-AlmostDS} it is conjectured that almost all graphs are DS, more recent surveys can be found in \cite{Dam-Dos}.

A variant of this problem concerns the {\it generalized spectrum} of $G$ which is given by the pair  $\big({\rm spec}(G),\,{\rm spec}(\overline G)\big).$ In this situation $G$ is said to be {\it determined by its generalized spectrum,} or a DGS graph for short, if   $\big({\rm spec}(H),\,{\rm spec}(\overline H)\big)=\big({\rm spec}(G),\,{\rm spec}(\overline G)\big)$ implies that $H$ is isomorphic to $G.$ For the most recent results on DGS graphs see \cite{LHMao-NewMethodDS} and \cite{Wang-ASimpleCriterion} in particular. To date only  a few families of graphs are known to be DS or DGS. This includes some almost complete graphs \cite{Camara-Almost}, pineapple graphs \cite{Hatice-Pineapple} and kite graphs \cite{Hatice-Kite}.  These are all known to be DS, and in particular DGS.  In addition all rose graphs are determined by their Laplacian spectrum except for two specific examples, see~\cite{He-Rose}.

In this paper we construct infinite sequences of DGS graphs from certain small starter graphs. This construction involves the {\it walk matrix} of a graph and recent results in~\cite{Wang-ASimpleCriterion}. Let $n$ be the number of vertices of $G$ and let $e$ be the column vector of height $n$ with all entries equal to $1.$ Then the {\it walk matrix} of $G$ is the $n\times n$ matrix
\begin{equation*}
W=\big[e,\,Ae,\,A^2e,\cdots, A^{n-1}e \big]
\end{equation*}
formed by  the column vectors $A^{i}e.$ In this paper we investigate the determinant of the  walk matrix of  $\overline{G}$ and of graphs obtained by joins and unions of $G$ with a single new vertex.

We prove that the determinant of the walk matrix of $G$ and that of $\overline{G}$ are the same up to a sign, see Theorem \ref{thm-detbarW}. We consider a graph $G$ and a new vertex $w$. In this situation the union $G\cup w$ and the join $G\vee w$ graphs can be defined, see Section 2. In Theorem \ref{thm-detWGcupv}
we obtain a formulae for the determinant of their walk matrices in terms of the determinants of $G,$ $\overline{G}$ and $W.$

In Wang \cite{Wang-GeneralizedRevisited} it is shown that $2^{\lfloor\frac{n}{2}\rfloor}$ divides $\det(W)$. Furthermore, if
\begin{equation*}
  \mathcal{C}:\quad \frac{\det(W)}{2^{\lfloor\frac{n}{2}\rfloor}}\text{\,\, is odd and square-free }
\end{equation*}
then $G$ is determined by its generalized spectrum \cite{Wang-ASimpleCriterion}. This shows that the divisor $2^{\lfloor\frac{n}{2}\rfloor}$  plays a special role for the determinant of the walk matrix, in some sense\,
$\mathcal{C}$\, is an extremal divisibility condition, see also Section 6.2 in \cite{LHMao-NewMethodDS}.

In Section~3 we give the construction of infinite families of graphs in which $\mathcal{C}$ holds, this is Theorems~\ref{thm-detWni} and~\ref{thm-detWn2to-n-above2}. This  construction provides in particular  infinite families of graphs which are determined by their generalized spectrum. Unlike constructing graphs with the same generalized spectrum (e.g. GM-Switching), very few methods are known for building DGS graphs. The results in this paper therefore contribute to our understanding 
of DGS graphs, they also give a partial answer a problem posed in \cite{LHMao-NewMethodDS}.

\medskip
All graphs in this paper are finite, simple  and undirected. Our notation follows the standard texts, for instance~\cite{Cvetkovic-IntroductionGraphSpec,Godsil-AlgebraicGraphTheory}. The vertex set of the graph $G$ is denoted $V.$
The  \emph{characteristic polynomial} $P(x)$ of $G$ is the characteristic polynomial of $A,$ thus $P(x)=\det(x I-A),$ and the \emph{eigenvalues} of  $G$ or $A$ are the roots of $P(x).$ 
When it is necessary to refer to a particular graph $H$ we denote its vertices, adjacency or walk matrix by $V(H),$ $A(H)$ or $W(H),$ etc.

\section{The determinant of the walk matrix}
We begin by giving a formula for the determinant of the walk matrix of a graph and its complement.

\medskip
\begin{theorem}\label{thm-detbarW}
Let $G$ be a graph on $n$ vertices with walk matrix $W.$ Then the walk matrix 
of its complement satisfies
\begin{equation*}
  \det(\overline{W})=(-1)^{\frac{n(n-1)}{2}}\det(W)\,\,.\end{equation*}
\end{theorem}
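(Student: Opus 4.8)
The plan is to exhibit an explicit factorization $\overline{W} = WT$ in which $T$ is triangular with an easily computed diagonal, so that the determinant identity falls out by multiplicativity of $\det$. The starting point is the relation $\overline{A} = J - I - A$, where $J = ee^{\top}$ is the all-ones matrix. The crucial feature of $J$ is that it has rank one and sends every vector into the line spanned by $e$: for any $v$ one has $Jv = (e^{\top}v)\,e$. In particular $JA^{k}e = (e^{\top}A^{k}e)\,e$ is a scalar multiple of $e$, the very first column of $W$.

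To organize the bookkeeping I would introduce the flag of subspaces $\mathcal{W}_{k} = \mathrm{span}(e, Ae, \dots, A^{k}e)$ for $k = 0,1,\dots,n-1$, with $\mathcal{W}_{-1} = \{0\}$. By construction $A\,\mathcal{W}_{k} \subseteq \mathcal{W}_{k+1}$, while $I$ preserves each $\mathcal{W}_{k}$ and $J$ maps everything into $\mathcal{W}_{0}$. The heart of the argument is then the single congruence
\begin{equation*}
\overline{A}^{\,j}\,e \;\equiv\; (-1)^{j}A^{j}e \pmod{\mathcal{W}_{j-1}}, \qquad j = 0,1,\dots,n-1,
\end{equation*}
which I would prove by induction on $j$. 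The base case $j=0$ is the identity $e=e$. For the inductive step I apply $\overline{A} = J - I - A$ to $\overline{A}^{\,j-1}e = (-1)^{j-1}A^{j-1}e + u$ with $u \in \mathcal{W}_{j-2}$; the term $-A\cdot(-1)^{j-1}A^{j-1}e$ produces the new leading term $(-1)^{j}A^{j}e$, while every remaining contribution — the $J$- and $I$-parts, together with $(J-I-A)u$ — lands in $\mathcal{W}_{j-1}$ precisely because $J$ collapses onto $\mathcal{W}_{0}$ and $A$ raises the level of $u$ by only one.

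This congruence says exactly that the $j$-th column of $\overline{W}$ equals $(-1)^{j}$ times the $j$-th column of $W$ plus a combination of the earlier columns of $W$. Hence $\overline{W} = WT$ for an upper-triangular matrix $T$ whose diagonal entries are $T_{jj} = (-1)^{j}$. Taking determinants gives $\det(\overline{W}) = \det(W)\cdot\prod_{j=0}^{n-1}(-1)^{j} = (-1)^{0+1+\cdots+(n-1)}\det(W) = (-1)^{n(n-1)/2}\det(W)$, as claimed; note that, since the factorization $\overline{W}=WT$ is valid whether or not $W$ is invertible, this already subsumes the singular case, where both determinants vanish. I expect the only delicate point to be the verification that the rank-one $J$-terms never contaminate the leading order — that is, the degree bookkeeping in the inductive step — since this is the one place where the peculiar structure of $\overline{A}$, as opposed to merely $-A$, actually enters.
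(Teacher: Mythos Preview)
Your proof is correct and follows essentially the same approach as the paper: both arguments prove by induction that $\overline{A}^{\,j}e$ equals $(-1)^{j}A^{j}e$ plus a combination of $e,Ae,\dots,A^{j-1}e$, using the fact that $J$ collapses everything onto the span of $e$, and then read off the determinant. Your packaging via the triangular factorization $\overline{W}=WT$ and the flag $\mathcal{W}_{k}$ is slightly more structural than the paper's direct coefficient bookkeeping, but the content is the same.
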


\begin{proof}
Let $A$ and $\overline{A}$ be the adjacency matrix of $G$ and $\overline{G}$ respectively. We show that for each $k$ the $k$-th column of $\overline{W}$ can be expressed as a linear combination of the first $k$ columns of $W$. This is true for the first and second columns of $\overline{W}$ since
\begin{equation*}
\overline{A}e=(J-I-A)e=(n-1)e-Ae,
\end{equation*}
where $J$ is the all-one matrix and $I$ is the identity matrix.
So we assume that the claim holds for some $k,$ that is, there exist numbers  $c_0,c_1,\ldots,c_{k-1}\in \mathbb{R}$ such that
\begin{equation*}
\overline{A}^{k-1}e=\sum_{i=0}^{k-1}c_iA^ie.
\end{equation*}
Since $JA^ie=(e^TA^ie)e$, we have
\begin{equation}\label{eq-detWCACk}
\begin{split}
\overline{A}^{k}e&=\overline{A}(\overline{A}^{k-1}e)\\
&=(J-I-A)(\sum_{i=0}^{k-1}c_iA^ie)\\
&=[\sum_{i=0}^{k-1}c_i(e^TA^ie)-c_0]e-\sum_{i=1}^{k-1}(c_{i-1}+c_i)A^ie-c_{k-1}A^ke.
\end{split}
\end{equation}
Therefore the above claim is true. In particular, for each $k=1,2,\ldots$, we conclude that the coefficient of the vector $A^ke$ is $-c_{k-1}=(-1)^k$. Substituting \eqref{eq-detWCACk} into $\overline{W}$ gives
\begin{equation*}
  \det(\overline{W})=\det \big[e,\,-Ae,\,A^2e, -A^{3}e,\cdots,\, (-1)^{n-1}A^{n-1}e \big]
\end{equation*}
and so the result follows from the multilinearity of the determinant.
\end{proof}

\medskip
By the same reasoning we have  the following  result for the leading principal submatrices of the walk matrix.

\medskip
\begin{corollary}
Let $W_1,W_2,\ldots,W_k$ (resp. $\overline{W}_1,\overline{W}_2,\ldots,\overline{W}_k$) be the first $k$ leading principal submatrices of the walk matrix $W$ (resp. $\overline{W})$ for $k=1,2,\ldots,n$. Then
\begin{equation*}
  \det(\overline{W}_k)=(-1)^{\frac{k(k-1)}{2}}\det(W_k)\,\,. \end{equation*}
\end{corollary}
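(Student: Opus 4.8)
The plan is to extract from the proof of Theorem~\ref{thm-detbarW} a single matrix identity $\overline{W}=WT$ with $T$ upper triangular, and then to observe that such an identity restricts cleanly to leading principal submatrices. First I would record precisely what the proof of Theorem~\ref{thm-detbarW} establishes: for each $j=0,1,\ldots,n-1$ the column $\overline{A}^{j}e$ lies in the span of $e,Ae,\ldots,A^{j}e$, and the coefficient of the top term $A^{j}e$ equals $(-1)^{j}$. Collecting these expressions columnwise gives $\overline{W}=WT$, where the change-of-basis matrix $T=(t_{ij})$ satisfies $t_{ij}=0$ whenever $i>j$ (so $T$ is upper triangular) and $t_{jj}=(-1)^{j}$ along the diagonal.

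The key step is to show that this relation descends to the $k$-th leading principal submatrices, that is, $\overline{W}_k=W_k T_k$ where $T_k$ is the top-left $k\times k$ block of $T$. I would argue entrywise. For column indices $j\le k-1$ the upper triangularity forces $t_{ij}=0$ for all $i>k-1$, so $(\overline{W})_{rj}=\sum_{i=0}^{k-1}W_{ri}t_{ij}$; in other words the first $k$ columns of $\overline{W}$ use only the columns $e,Ae,\ldots,A^{k-1}e$ of $W$, never the ``later'' ones. Restricting in addition to the first $k$ rows $r\le k-1$ -- an operation that passes through the right-hand factor, since row selection is left multiplication by a selection matrix -- yields $(\overline{W}_k)_{rj}=\sum_{i=0}^{k-1}(W_k)_{ri}(T_k)_{ij}=(W_kT_k)_{rj}$, hence $\overline{W}_k=W_kT_k$ exactly.

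Finally I would take determinants. Since $T_k$ is upper triangular with diagonal entries $(-1)^{0},(-1)^{1},\ldots,(-1)^{k-1}$, its determinant is $\prod_{j=0}^{k-1}(-1)^{j}=(-1)^{0+1+\cdots+(k-1)}=(-1)^{k(k-1)/2}$. Multiplicativity of the determinant then gives $\det(\overline{W}_k)=\det(W_k)\det(T_k)=(-1)^{k(k-1)/2}\det(W_k)$, which is the claim, and the case $k=n$ recovers Theorem~\ref{thm-detbarW}.

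I expect the only genuinely delicate point to be the passage from $\overline{W}=WT$ to $\overline{W}_k=W_kT_k$ for leading principal blocks. Taking principal submatrices does not in general respect matrix products, so it is essential to exploit the upper triangularity of $T$: it is precisely this structure that prevents the columns $A^{k}e,\ldots,A^{n-1}e$ from contributing to the first $k$ columns of $\overline{W}$, and that makes the block identity valid. Once that point is secured, the remainder is routine bookkeeping with the arithmetic series in the exponent.
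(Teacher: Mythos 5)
Your proposal is correct and takes essentially the same approach as the paper: the paper offers no separate argument for the corollary beyond ``by the same reasoning,'' meaning precisely the fact from the proof of Theorem~\ref{thm-detbarW} that each column $\overline{A}^{j}e$ lies in the span of $e, Ae, \ldots, A^{j}e$ with leading coefficient $(-1)^{j}$, i.e.\ the factorization $\overline{W}=WT$ with $T$ upper triangular and diagonal $(-1)^{j}$. Your explicit verification that this upper triangularity makes the identity descend to $\overline{W}_k=W_kT_k$ on leading principal blocks is exactly the detail the paper leaves implicit.
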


\medskip
The graph $G$ is {\it controllable} if its walk matrix $W(G)$ is invertible. This property can also be characterized by the main eigenvalues and main eigenvectors  of the graph, see~\cite{Godsil-Controllable}. The relevance of controllability becomes clear from the recent work of O'Rourke and B. Touri~\cite{ORourke-AlmostAllControllable} who proved Godsil's conjecture~\cite{Godsil-Controllable} that asymptotically all graphs are controllable.
The  theorem above also implies the following well-known fact concerning controllable graphs:

\medskip
\begin{corollary}\cite{Godsil-Controllable}
A graph is controllable if and only if its complement is controllable.
\end{corollary}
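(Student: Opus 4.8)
The plan is to read off this corollary as an immediate consequence of Theorem~\ref{thm-detbarW}. By definition, $G$ is controllable precisely when its walk matrix $W$ is invertible, that is, when $\det(W)\neq 0$; likewise $\overline{G}$ is controllable precisely when $\det(\overline{W})\neq 0$. So the entire task reduces to showing that $\det(W)$ and $\det(\overline{W})$ vanish simultaneously.

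First I would invoke Theorem~\ref{thm-detbarW}, which gives the exact relation
\begin{equation*}
  \det(\overline{W})=(-1)^{\frac{n(n-1)}{2}}\det(W).
\end{equation*}
The key observation is that the scalar prefactor $(-1)^{\frac{n(n-1)}{2}}$ equals $\pm 1$ and is in particular never zero. Hence $\det(\overline{W})=0$ if and only if $\det(W)=0$, and equivalently $\det(\overline{W})\neq 0$ if and only if $\det(W)\neq 0$.

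Combining these two steps, $W$ is invertible exactly when $\overline{W}$ is invertible, which is precisely the statement that $G$ is controllable if and only if $\overline{G}$ is. There is no real obstacle here: the content lies entirely in Theorem~\ref{thm-detbarW}, and once that determinant identity is available the corollary follows by the trivial remark that multiplication by a unit $\pm 1$ preserves nonvanishing. The only thing to be careful about is to phrase controllability in terms of $\det(W)\neq 0$ rather than invertibility directly, so that the sign identity can be applied verbatim.
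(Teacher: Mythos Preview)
Your proposal is correct and matches the paper's approach exactly: the paper presents this corollary as an immediate consequence of Theorem~\ref{thm-detbarW} without a separate proof, and your argument spells out precisely that deduction. There is nothing to add.
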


\medskip Let $G$ be a graph and let $w$ be a new vertex, $w\not\in V(G).$ Then the {\it union} of $G$ and the singleton graph $\{w\}$, denoted by $G\cup w$, is the graph obtained from $G$ by adding $w$ as an isolated vertex. The {\it join} of $G$ and $\{w\}$, denoted by $G\vee w$, is the graph obtained from $G$ by adding the vertex $w$ and making it adjacent to all vertices of $G$. For these graph operations we have the following result.

\medskip
\begin{theorem}\label{thm-detWGcupv}
Let $G$ be a graph with adjacency matrix $A$ and walk matrix $W$. Then we have\\[-25pt]
\begin{enumerate}
\item[$(i)$] $\det(W(G\cup w))=\pm\det(A)\det(W)$ and
\item [$(ii)$] $\det(W(G\vee w))=\pm\det(\overline{A})\det(W)$.
\end{enumerate}
\end{theorem}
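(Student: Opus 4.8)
The plan is to establish $(i)$ by a direct computation with the block adjacency matrix of $G\cup w$, and then to deduce $(ii)$ from $(i)$ by complementation, invoking Theorem~\ref{thm-detbarW}. Throughout suppose $G$ has $n$ vertices and write $e$ for the all-ones vector of height $n$. Then $G\cup w$ has adjacency matrix $\begin{pmatrix} A & 0 \\ 0^{T} & 0\end{pmatrix}$ and its all-ones vector is $\begin{pmatrix} e \\ 1\end{pmatrix}$.

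For $(i)$ I would first compute the walk vectors of $G\cup w$. Because the block adjacency matrix is block-diagonal, its $k$-th power is $\begin{pmatrix} A^{k} & 0 \\ 0^{T} & 0\end{pmatrix}$ for every $k\ge 1$, so that the $k$-th walk vector equals $\begin{pmatrix} A^{k}e \\ 0\end{pmatrix}$ for $k\ge 1$, while the zeroth is $\begin{pmatrix} e \\ 1\end{pmatrix}$. Hence the $(n+1)\times(n+1)$ walk matrix takes the form
\[
W(G\cup w)=\begin{pmatrix} e & Ae & A^{2}e & \cdots & A^{n}e \\ 1 & 0 & 0 & \cdots & 0\end{pmatrix}.
\]
Expanding the determinant along the bottom row $(1,0,\ldots,0)$ leaves a single nonzero term, equal up to the sign $(-1)^{n}$ to the minor $\det[\,Ae,\,A^{2}e,\ldots,A^{n}e\,]$. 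The key observation is that this minor factors, since $[\,Ae,\,A^{2}e,\ldots,A^{n}e\,]=A\,[\,e,\,Ae,\ldots,A^{n-1}e\,]=AW$, giving $\det(W(G\cup w))=(-1)^{n}\det(A)\det(W)$ and hence $(i)$.

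For $(ii)$ I would use that union and join are exchanged under complementation: complementing $G\vee w$ makes the full vertex $w$ isolated and turns $G$ into $\overline{G}$, so that $\overline{G\vee w}=\overline{G}\cup w$. Applying Theorem~\ref{thm-detbarW} to the $(n+1)$-vertex graph $G\vee w$ gives $\det(W(G\vee w))=\pm\det(\overline{W(G\vee w)})=\pm\det(W(\overline{G}\cup w))$. Part $(i)$, applied to $\overline{G}$ in place of $G$, evaluates the right-hand side as $\pm\det(\overline{A})\det(\overline{W})$, and one more application of Theorem~\ref{thm-detbarW} to $G$ replaces $\det(\overline{W})$ by $\pm\det(W)$. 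Collecting the signs yields $\det(W(G\vee w))=\pm\det(\overline{A})\det(W)$.

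The calculation is essentially routine, so I do not expect a serious obstacle; the two points that need care are the index bookkeeping in $(i)$ --- one must check that the surviving columns are exactly $Ae,\ldots,A^{n}e$, so that the factored matrix is precisely $W$ rather than a shifted variant --- and, in $(ii)$, the verification of the identity $\overline{G\vee w}=\overline{G}\cup w$ together with the tracking of the various sign exponents, all of which are absorbed into the stated $\pm$. A direct treatment of $(ii)$ by the block-expansion used for $(i)$ is possible but messier, because the powers of the join's adjacency matrix $\begin{pmatrix} A & e \\ e^{T} & 0\end{pmatrix}$ do not simplify; the complement route is the natural shortcut.
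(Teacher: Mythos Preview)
Your proposal is correct and follows essentially the same route as the paper: a direct block computation for $(i)$ (the paper places $w$ as the first vertex and expands along the first row, you place it last and expand along the last row, but in both cases the surviving minor is $\det(AW)=\det(A)\det(W)$), and for $(ii)$ the identity $\overline{G\vee w}=\overline{G}\cup w$ together with Theorem~\ref{thm-detbarW} and part $(i)$. The only difference is the vertex ordering, which merely shifts the sign exponent and is absorbed into the $\pm$.
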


The  sign only depends on the position of the new vertex. For instance, if $\{w,v_{1},...,v_{n}\}$ are the vertices of $G\cup w$ then $\det(W(G\cup w))=\det(A)\det(W)$ and
$\det(W(G\vee w))=\det(\overline{A})\det(W)$.

\begin{proof}
Denote by $W'=\big[Ae,\,A^2e,\,\cdots, A^{n-1}e,\,A^ne \big]$. Then, taking the new vertex $w$ as the first vertex in $G\cup \{w\},$ we have
\begin{equation*}
  W(G\cup w)=\begin{bmatrix}
    1&0_{n\times 1}\\
    1_{n\times 1}&W'
  \end{bmatrix}.
\end{equation*}
Expanding the determinant of $W(G\cup w)$ along the first row gives
\begin{equation*}
\begin{split}
\det(W(G\cup w))&=\det(W')=\det(AW)=\det(A)\det(W).
\end{split}
\end{equation*}
If the new vertex $w$ is labelled even, then $\det(W(G\cup w))=-\det(A)\det(W)$.
This proves the first part of the theorem.

For the second part, first note that $G\vee w= \overline{\overline{G}\cup w}$. Now use  Theorem~\ref{thm-detbarW}\, to obtain
\begin{equation*}
\begin{split}
\det(W(G\vee w))&=\det (W(\overline{\overline{G}\cup w}))\\
&=(-1)^{\frac{n(n+1)}{2}}\det (W(\overline{G}\cup w))\\
&=\pm\det(\overline{A})\det(\overline{W})\\
&=\pm\det(\overline{A})\det(W).
\end{split}
\end{equation*}
This completes the proof.
\end{proof}

\medskip
\begin{remark}\label{rem-controllable}
 {\rm  A graph is {\it singular} if its adjacency matrix is singular, see \cite{Sciriha-Singular, S-Z}. The theorem has an interesting consequence for singular graphs: If $G$ is controllable then $G\cup w$ is controllable if and only if $G$ is not singular. Similarly, if $G$ is controllable then $G\vee w$ is controllable if and only if $\overline{G}$ is not singular.}
\end{remark}

\section{Constructing DGS graphs}

In this section we construct families of DGS graphs by using the union and join operations.
We begin with the Coefficient Theorem of Sachs. It relates the coefficient of  the characteristic polynomial of the graph to its  structure.  An \emph{elementary graph} is a graph in which each component is $K_2$ or a cycle.

\medskip
\begin{theorem}[Sachs Coefficients Theorem, see e.g.~\cite{Cvetkovic-IntroductionGraphSpec}]\label{thm-Sach}
  Let $G$ be a graph on $n$ vertices with characteristic polynomial $P(x)=x^n+c_1x^{n-1}+\cdots+c_{n-1}x+c_n$. Denote by  $\mathcal{H}_i$ the set of all elementary subgraphs of $G$ with $i$ vertices. For $H$ in $\mathcal{H}_i$ let $p(H)$ denote the number of components of $H$ and $c(H)$ the number of cycles in $H$. Then
\begin{equation*}
  c_i=\sum_{H\in\mathcal{H}_i}(-1)^{p(H)}2^{c(H)},\mbox{\qquad for all $i=1,\ldots,n$.}
\end{equation*}
\end{theorem}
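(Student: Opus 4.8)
The plan is to compute the coefficients $c_i$ directly from the expansion of $P(x)=\det(xI-A)$ and to extract their combinatorial meaning through the Leibniz formula for the determinant. First I would record the standard fact that, since $P(x)=\prod_j(x-\lambda_j)$, the coefficient of $x^{n-i}$ equals $(-1)^i$ times the $i$-th elementary symmetric function of the eigenvalues, and that this symmetric function is the sum of all $i\times i$ principal minors of $A$. This gives
\begin{equation*}
 c_i=(-1)^i\sum_{|S|=i}\det\big(A[S]\big),
\end{equation*}
where $S$ ranges over the $i$-subsets of $V$ and $A[S]$ is the principal submatrix of $A$ on the rows and columns indexed by $S$.

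Next I would expand each minor by the Leibniz formula $\det(A[S])=\sum_{\sigma}\operatorname{sgn}(\sigma)\prod_{v\in S}A_{v,\sigma(v)}$, with $\sigma$ running over the permutations of $S$. Since $A$ has zero diagonal and $0/1$ entries, a term is nonzero precisely when $\sigma$ is fixed-point-free and every pair $\{v,\sigma(v)\}$ is an edge of $G$. I would then observe that such a $\sigma$ is exactly a partition of $S$ into the supports of its cycles: a transposition $(v\,w)$ forces the edge $vw$ and corresponds to a $K_2$, while a cycle of length $\ell\ge 3$ forces a cycle of $G$ on those $\ell$ vertices. Hence the nonzero terms across $\sum_{|S|=i}\det(A[S])$ are in correspondence with the elementary subgraphs $H\in\mathcal H_i$, up to a sign-and-multiplicity count that must be carried out carefully.

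This bookkeeping step is the one I expect to be the main obstacle. Here I would argue that a permutation on $i$ points with $r$ disjoint cycles has sign $(-1)^{i-r}$, and that for an elementary subgraph $H$ the number $r$ of cycles equals the number of components $p(H)$; thus every contributing $\sigma$ carries the sign $(-1)^{i-p(H)}$. I then count how many permutations realise a fixed $H$: a $K_2$-component comes from a unique transposition, whereas a genuine cycle of length $\ge 3$ can be traversed in two directions and so arises from exactly two permutation-cycles. The total multiplicity of $H$ is therefore $2^{c(H)}$, where $c(H)$ counts the cycle components of $H$.

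Finally I would assemble the pieces. Substituting the sign and the multiplicity into the minor expansion gives $\det(A[S])=\sum_{H\text{ on }S}(-1)^{i-p(H)}2^{c(H)}$, and summing over $S$ collapses the double sum into a single sum over all $H\in\mathcal H_i$, since each $H$ has a unique vertex set. Multiplying by the leading $(-1)^i$ and using $(-1)^i(-1)^{i-p(H)}=(-1)^{p(H)}$ then yields
\begin{equation*}
 c_i=\sum_{H\in\mathcal H_i}(-1)^{p(H)}2^{c(H)},
\end{equation*}
which is the claimed formula.
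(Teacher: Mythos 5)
Your proof is correct. Note that the paper itself does not prove this theorem at all: it is quoted as a known result with a reference to \cite{Cvetkovic-IntroductionGraphSpec}, so there is no in-paper argument to compare against. Your argument is the classical textbook proof: writing $c_i=(-1)^i\sum_{|S|=i}\det(A[S])$ via principal minors, expanding each minor by the Leibniz formula, observing that the surviving terms are the fixed-point-free permutations of $S$ whose $2$-cycles and longer cycles trace out exactly the elementary subgraphs on $S$, and then doing the bookkeeping --- sign $(-1)^{i-p(H)}$ from a permutation with $p(H)$ cycles, multiplicity $2^{c(H)}$ from the two orientations of each cycle of length at least $3$ versus the unique transposition for each $K_2$ --- all of which you carry out correctly, including the final collapse of the double sum over $S$ and $H$ using the fact that each $H\in\mathcal{H}_i$ has a unique vertex set.
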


\medskip
The following theorem due to Wang~\cite{Wang-ASimpleCriterion} characterizes certain DGS graphs by an arithmetic property of the determinant of their walk matrix.

\medskip

\begin{theorem}[Wang~\cite{Wang-ASimpleCriterion}]\label{thm-Wang-ASimpleCriterion}
Let $G$ be  a graph on $n$ vertices with $n\geq 6$ and walk matrix $W.$  Then $2^{\lfloor{\frac{n}{2}}\rfloor}$ divides $\det(W)$. Furthermore, if $2^{-\lfloor{\frac{n}{2}}\rfloor}\det(W)$ is odd and square-free then $G$ is determined  by is generalized spectrum.
\end{theorem}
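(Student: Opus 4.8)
The plan is to reduce the DGS property to an arithmetic statement about a single rational orthogonal matrix and then to bound its denominators prime by prime. First I would record the Wang--Xu characterization of generalized cospectrality: two graphs $G$ and $H$ have the same generalized spectrum if and only if there is a real orthogonal matrix $Q$ with $Q^{T}A(G)Q=A(H)$ and $Qe=e$. The hypotheses guarantee $\det(W)\neq0$, so $G$ is controllable; since $|\det W|$ is a generalized-spectral invariant (the entries $e^{T}A^{k}e$ of the Gram matrix $W^{T}W$, whose determinant is $(\det W)^{2}$, are determined by the two spectra), any $H$ with the same generalized spectrum also has $\det(W(H))\neq0$ and is controllable. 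In that case $Q$ is unique and given explicitly by $Q=W(H)\,W(G)^{-1}$, so $Q$ is rational. The whole problem now becomes: show that this $Q$ is a permutation matrix. Writing $\ell$ for the level of $Q$, i.e.\ the least positive integer with $\ell Q$ integral, it suffices to prove $\ell=1$, since an integral orthogonal matrix is a signed permutation matrix and the condition $Qe=e$ forces all signs positive, giving $Q^{T}A(G)Q=A(H)$ with $Q$ a genuine permutation, hence $G\cong H$.

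To control $\ell$ I would factor it into prime powers and treat odd primes and the prime $2$ separately. For an odd prime $p$, the key lemma I would establish is the implication
\begin{equation*}
p\mid\ell\ \Longrightarrow\ p^{2}\mid\det(W).
\end{equation*}
The idea is to pass to $\mathbb{F}_{p}$: from $\ell Q$ integral with $Q^{T}Q=I$ and $Qe=e$ one extracts a nonzero $v\in\mathbb{F}_{p}^{n}$ in the kernel of $W^{T}$, and the orthogonality of $Q$ forces $v$ to be isotropic, $v^{T}v\equiv0$, together with $v^{T}e\equiv0$. A standard isotropy argument then makes $W$ drop rank by at least two modulo $p$, so $v_{p}(\det W)\ge2$. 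Granting this, the square-freeness of the odd part of $\det(W)/2^{\lfloor n/2\rfloor}$ gives $v_{p}(\det W)\le1$ for every odd $p$, a contradiction; hence no odd prime divides $\ell$.

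The prime $2$ is where the real difficulty lies, and this is the step I expect to be the main obstacle. Unlike the odd case, $2^{\lfloor n/2\rfloor}$ always divides $\det(W)$: this structural floor comes from the fact that $W^{T}W$ reduces modulo $2$ to a Hankel matrix with $(i,j)$ entry $\equiv\operatorname{tr}(A^{i+j-2})$, which forces $\operatorname{rank}_{\mathbb{F}_{2}}(W)\le\lceil n/2\rceil$ and hence $2^{\lfloor n/2\rfloor}\mid\det(W)$, the divisibility asserted in the first sentence of the theorem. Because this $2$-divisibility is unconditional, one cannot detect $2\mid\ell$ merely from $2\mid\det(W)$. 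Instead I would prove the sharper statement
\begin{equation*}
2\mid\ell\ \Longrightarrow\ v_{2}(\det W)\ge\Big\lfloor\tfrac{n}{2}\Big\rfloor+1,
\end{equation*}
by a $2$-adic refinement: analysing the Smith normal form of $W$ over the $2$-adic integers and showing that an even level forces one extra even invariant factor beyond the $\lfloor n/2\rfloor$ guaranteed by the Hankel structure. The hypothesis that $\det(W)/2^{\lfloor n/2\rfloor}$ is odd means $v_{2}(\det W)=\lfloor n/2\rfloor$ exactly, which rules this out, so $2\nmid\ell$. The assumption $n\ge6$ should enter precisely here, to exclude small configurations where the structural count is exceptional.

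Combining the two cases gives $\ell=1$, and by the reduction of the first paragraph $Q$ is then a permutation matrix, so $G\cong H$ and $G$ is determined by its generalized spectrum. Throughout I would use Theorem~\ref{thm-detbarW} and its controllability corollary to move freely between $G$ and $\overline{G}$, keeping the odd-prime and $2$-adic arguments symmetric under complementation.
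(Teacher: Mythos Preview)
The paper does not give a proof of this statement at all: Theorem~\ref{thm-Wang-ASimpleCriterion} is quoted from Wang~\cite{Wang-ASimpleCriterion} (with the divisibility clause coming from~\cite{Wang-GeneralizedRevisited}) and is used here purely as a black box in the proofs of Theorems~\ref{thm-detWni} and~\ref{thm-detWn2to-n-above2}. So there is nothing in the present paper to compare your argument against.

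That said, your outline is broadly the strategy of Wang's original paper: reduce via the Wang--Xu characterization to a rational orthogonal $Q$ with $Qe=e$, show $Q$ is unique and rational from controllability, and then prove its level is $1$ by separating odd primes from the prime $2$. A couple of cautions if you intend to flesh this out. First, your odd-prime step (``isotropy forces rank to drop by at least two'') hides the actual content: one must produce, from a column of $\ell Q$ reduced mod $p$, a nonzero vector $v$ with $W^{T}v\equiv0$, $v^{T}v\equiv0$ and $e^{T}v\equiv0$, and then argue via the Smith normal form of $W$ that a single isotropic null vector already forces $p^{2}\mid\det(W)$; the phrase ``drop rank by two'' is not quite what happens. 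Second, and more seriously, the $2$-adic step is not just a ``refinement'' of the same idea; in Wang's argument it requires a substantially more delicate analysis of the lattice generated by the columns of $W$ over $\mathbb{Z}_{2}$, and your proposal does not indicate how you would carry this out. Finally, your invocation of Theorem~\ref{thm-detbarW} at the end is unnecessary for this result: complementation symmetry plays no role in Wang's criterion itself.
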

\medskip

We are now able to state our next result.

\medskip
\begin{theorem}\label{thm-detWni}
Let $G_0,\,G_{1},\,G_{2},\,..$. be a sequence of graphs which satisfy the following conditions
\begin{equation*}
  G_{i}=\begin{cases}
    G_{i-1}\cup w_{i}\quad(\mbox{if $i\ge1$ is odd});\\
    G_{i-1}\vee w_{i}\quad(\mbox{if $i\ge1$ is even}).
  \end{cases}
\end{equation*}
Denote $n_{0}:=|V(G_{0})|$, $a:=|\det(A(G_{0}))|,$ \,$b:=|\det(W(G_{0}))|$ and  $p:=|\det(A(\overline{G}_1))|$. Then\begin{equation}\label{eq-dewWGi}|\det(W(G_{i}))|=a^{\lceil\frac{i}{2}\rceil}bp^{\lfloor\frac{i}{2}\rfloor} \quad \text{for all $i\geq 1$.}
\end{equation}
\end{theorem}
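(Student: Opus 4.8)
The plan is to argue by induction on $i$, feeding the two cases of Theorem~\ref{thm-detWGcupv} into one another. Since $G_i=G_{i-1}\cup w_i$ for odd $i$ and $G_i=G_{i-1}\vee w_i$ for even $i$, part~$(i)$ of that theorem gives $|\det(W(G_i))|=|\det(A(G_{i-1}))|\cdot|\det(W(G_{i-1}))|$ when $i$ is odd, while part~$(ii)$ gives $|\det(W(G_i))|=|\det(A(\overline{G}_{i-1}))|\cdot|\det(W(G_{i-1}))|$ when $i$ is even (here I use $\overline{A(G)}=A(\overline G)$, and all signs are harmless since the statement only involves absolute values). Thus the whole problem reduces to computing the two families of adjacency determinants $|\det(A(G_{i-1}))|$ for even indices and $|\det(A(\overline{G}_{i-1}))|$ for odd indices.

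The key structural observation I would isolate first is how complementation interacts with the two operations: $\overline{G\cup w}=\overline G\vee w$ and $\overline{G\vee w}=\overline G\cup w$ (a single vertex being self-complementary). Consequently, for odd $i$ the graph $G_i$ has the isolated vertex $w_i$, while for even $i$ the complement $\overline{G}_i=\overline{G}_{i-1}\cup w_i$ has the isolated vertex $w_i$; in either case the relevant adjacency matrix is singular, so the determinant we actually need comes from the \emph{next} join. For that I would prove the auxiliary lemma: if $H$ has an isolated vertex $u$ then $u$ is a pendant vertex of $H\vee w$ (adjacent only to $w$), and expanding $\det(A(H\vee w))$ along the row of $u$ yields $\det(A(H\vee w))=-\det(A(H-u))$, hence $|\det(A(H\vee w))|=|\det(A(H-u))|$.

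With the lemma in hand the two adjacency-determinant families are computed by peeling off two indices at a time. For even $i$ write $G_i=G_{i-1}\vee w_i$ with $G_{i-1}=G_{i-2}\cup w_{i-1}$, so $G_{i-1}$ has isolated vertex $w_{i-1}$ and $G_{i-1}-w_{i-1}=G_{i-2}$; the lemma gives $|\det(A(G_i))|=|\det(A(G_{i-2}))|$, and descending to the base case $|\det(A(G_0))|=a$ yields $|\det(A(G_i))|=a$ for every even $i$. Symmetrically, for odd $i$ the identity $\overline{G}_i=\overline{G}_{i-1}\vee w_i$ together with $\overline{G}_{i-1}=\overline{G}_{i-2}\cup w_{i-1}$ gives $|\det(A(\overline{G}_i))|=|\det(A(\overline{G}_{i-2}))|$, and the base case $p=|\det(A(\overline{G}_1))|$ yields $|\det(A(\overline{G}_i))|=p$ for every odd $i$.

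It then remains to substitute these values into the two recursions and collect exponents. Starting from $|\det(W(G_1))|=a\,b$, each even step multiplies by $p$ and each odd step with $i\ge3$ multiplies by $a$; counting $\lfloor i/2\rfloor$ even steps and $\lceil i/2\rceil-1$ odd steps up to index $i$ gives $|\det(W(G_i))|=a\,b\cdot a^{\lceil i/2\rceil-1}p^{\lfloor i/2\rfloor}=a^{\lceil i/2\rceil}b\,p^{\lfloor i/2\rfloor}$, as claimed. I expect the only real obstacle to be the auxiliary lemma and, more precisely, recognizing the two-step periodicity it creates: a single union or join makes the relevant adjacency matrix singular, and one must look one operation ahead (through the pendant-vertex reduction) to recover a nonzero determinant equal to the one two stages earlier. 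Once that periodicity is identified, the index bookkeeping with $\lceil\cdot\rceil$ and $\lfloor\cdot\rfloor$ is entirely routine.
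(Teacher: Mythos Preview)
Your argument is correct and follows the same overall inductive scheme as the paper: reduce via Theorem~\ref{thm-detWGcupv} to showing $|\det(A(G_{2i}))|=a$ and $|\det(A(\overline{G}_{2i+1}))|=p$, then collect exponents. The difference lies in how you establish these two periodicities. The paper invokes Sachs' Coefficient Theorem, observing that every elementary spanning subgraph of $G_{2(i+1)}$ must contain the edge $w_{2i+2}w_{2i+1}$ as a $K_2$ component, giving a sign-reversing bijection with elementary spanning subgraphs of $G_{2i}$. Your pendant-vertex lemma $|\det(A(H\vee w))|=|\det(A(H-u))|$ (with $u$ isolated in $H$) is the linear-algebraic shadow of exactly this bijection: expanding along the row of $u$ and then along the remaining column of $w$ peels off that forced $K_2$. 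Your route is more elementary and self-contained, since it bypasses Sachs' theorem entirely; the paper's route makes the combinatorial reason for the periodicity more transparent. Either way the bookkeeping with $\lceil i/2\rceil$ and $\lfloor i/2\rfloor$ is identical.
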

\begin{proof}
By Theorem \ref{thm-detWGcupv} (i) and (ii) we have
\begin{equation*}
\begin{split}
|\det(W(G_1))|&=|\det(W(G_0\cup w_1))|=|\det(A(G_0))\det(W(G_0))|=ab,
\end{split}
\end{equation*}
\begin{equation*}
\begin{split}
|\det(W(G_2))|&=|\det(W(G_1\vee w_2))|=|\det(A(\overline{G}_1))\det(W(G_1))|=abp.
\end{split}
\end{equation*}
Thus the result holds for $i=1,2$. Since the determinant of the adjacency matrix and the constant term of its characteristic polynomial are the same up to sign we use Theorem \ref{thm-Sach} to link it to the elementary spanning subgraphs.  By induction we have   $|\det(A(G_{2i}))|=a$ and  $|\det(A(\overline{G}_{2i+1}))|=p$ for any integer $i\ge 1$. Since all elementary spanning subgraphs $H(G_{2i})$ in $\mathcal{H}_{\substack{n_{0}+2i}}(G_{2i})$ must have $K_2=w_{2i}w_{2i-1}$ as a component,  there is a bijection between the  elementary spanning subgraph set $\mathcal{H}_{\substack{n_{0}+2i}}(G_{2i})$ and $\mathcal{H}_{\substack{n_{0}+2(i +1)}}(G_{2(i+1)})$ i.e., there is a bijection
\begin{equation}\label{eq-constantbijection}
\begin{split}
&f:\mathcal{H}_{\substack{n_{0}+2i}}(G_{2i})\leftrightarrow \mathcal{H}_{\substack{n_{0}+2(i+1)}}(G_{2(i+1)}) \, \,\text{with} \\
&f(H(G_{2i}))=H(G_{2i})\cup w_{2i+2}w_{2i+1}.
\end{split}
\end{equation}
Note that the component $K_2=w_{2i+2}w_{2i+1}$ only changes the sign of the constant  coefficient of the characteristic polynomial. Therefore each pair of  elementary spanning subgraphs $H(G_{2i})$ and $H(G_{2i})\cup w_{2i+2}w_{2i+1}$ contribute opposite signs to the constant terms of  $P_{\substack{G_{2i}}}(x)$ and $P_{\substack{G_{2(i+1)}}}(x)$, respectively. Hence $|\det(A(G_{2(i+1)}))|=a$. Analogously we have   $|\det(A(\overline{G}_{2i+1}))|=p$. Equation  \eqref{eq-dewWGi} follows by applying Theorem \ref{thm-detWGcupv} repeatedly.
\end{proof}

From this result we obtain infinite sequences of graphs that are determined by their generalized spectrum:

\medskip
\begin{theorem}\label{thm-detWn2to-n-above2}
Let $G_0,\,G_{1},\,G_{2},\,\ldots$ be as in Theorem \ref{thm-detWni} and denote $|V(G_{0})|$ by $n_{0}.$ Suppose that  $\{a,p\}=\{1,2\}$ and that $b\cdot2^{-\lfloor\frac{n_0}{2}\rfloor}$ is an odd square-free integer. Then  the graphs $G_{i}$ are determined by their  generalized spectrum, for all $i\geq 1$.
\end{theorem}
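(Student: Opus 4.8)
The plan is to apply Wang's criterion (Theorem~\ref{thm-Wang-ASimpleCriterion}) to each $G_i$ individually. Since every union or join adds exactly one vertex, $|V(G_i)|=n_0+i$, so it suffices to show that $2^{-\lfloor(n_0+i)/2\rfloor}|\det(W(G_i))|$ is odd and square-free (whenever $n_0+i\geq 6$). Theorem~\ref{thm-detWni} already supplies the closed form $|\det(W(G_i))|=a^{\lceil i/2\rceil}\,b\,p^{\lfloor i/2\rfloor}$, and since $\{a,p\}=\{1,2\}$ the factor $a^{\lceil i/2\rceil}p^{\lfloor i/2\rfloor}$ is a pure power of $2$: it equals $2^{\lceil i/2\rceil}$ if $a=2$ and $2^{\lfloor i/2\rfloor}$ if $p=2$. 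Writing $m:=b\cdot 2^{-\lfloor n_0/2\rfloor}$, which is odd and square-free by hypothesis, and letting $v_2$ denote the $2$-adic valuation (so that $v_2(b)=\lfloor n_0/2\rfloor$ as $m$ is odd), the whole task reduces to showing that the exact power of $2$ dividing $|\det(W(G_i))|$ is $2^{\lfloor(n_0+i)/2\rfloor}$, so that the remaining odd factor is precisely $m$.

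The first and most important step is to pin down the parity of $n_0$, and here I would invoke the elementary fact that a graph of odd order has an adjacency matrix of even determinant: reducing $A$ modulo $2$ yields a symmetric matrix with zero diagonal, that is, an alternating matrix over $\mathbb{F}_2$, and an alternating matrix of odd size has even rank and is therefore singular, so $\det(A)$ is even. Hence $|\det(A)|=1$ can occur only in even order. Applying this to $G_0$ (of order $n_0$) and to $\overline{G}_1$ (of order $n_0+1$), the condition $a=1$ forces $n_0$ to be even, while $p=1$ forces $n_0$ to be odd.

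With the parity of $n_0$ determined, the valuation computation is routine. Combining $v_2(b)=\lfloor n_0/2\rfloor$ with the floor identities $\lfloor(2t+i)/2\rfloor=t+\lfloor i/2\rfloor$ and $\lfloor(2t+1+i)/2\rfloor=t+\lceil i/2\rceil$, I would verify in the case $a=1,\,p=2,\,n_0$ even that $v_2\big(|\det(W(G_i))|\big)=n_0/2+\lfloor i/2\rfloor=\lfloor(n_0+i)/2\rfloor$, and symmetrically in the case $a=2,\,p=1,\,n_0$ odd that this valuation equals $(n_0-1)/2+\lceil i/2\rceil=\lfloor(n_0+i)/2\rfloor$. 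In either case $2^{-\lfloor(n_0+i)/2\rfloor}|\det(W(G_i))|=m$ is odd and square-free, so (since $m\neq 0$ makes $G_i$ controllable, and $n_0+i\geq 6$ for all $i\geq 1$ in the relevant range) Theorem~\ref{thm-Wang-ASimpleCriterion} applies and each $G_i$ is determined by its generalized spectrum. The one genuinely delicate point is this parity bookkeeping: without first fixing the parity of $n_0$, the exponent coming from Theorem~\ref{thm-detWni} could differ by one from $\lfloor(n_0+i)/2\rfloor$, and it is exactly the alternating-matrix fact that removes this ambiguity and makes the two exponents coincide.
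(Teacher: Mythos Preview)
Your argument is correct and follows the same strategy as the paper: feed the formula from Theorem~\ref{thm-detWni} into Wang's criterion (Theorem~\ref{thm-Wang-ASimpleCriterion}). The paper's own proof simply asserts that $2^{-\lfloor (n_0+i)/2\rfloor}\det(W(G_i))$ is odd and square-free and stops there. You go further and actually justify this by fixing the parity of $n_0$ via the observation that an adjacency matrix of odd order is alternating over $\mathbb{F}_2$ and hence has even determinant; applied to $G_0$ and to $\overline{G}_1$ this forces $n_0$ even when $a=1$ and $n_0$ odd when $p=1$. You are right that this bookkeeping is the one nontrivial point: without it the $2$-adic valuation coming from $a^{\lceil i/2\rceil}b\,p^{\lfloor i/2\rfloor}$ could miss $\lfloor(n_0+i)/2\rfloor$ by one for odd $i$, and Wang's criterion would not apply. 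So your proof is not only correct but fills in a detail the paper leaves implicit.
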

\begin{proof}
It is easy to see that $2^{-\lfloor\frac{n_{0}+i}{2}\rfloor}\cdot \det(W(G_i))$ is odd and square-free when $\{a,p\}=\{1,2\}$ and $b\cdot {2^{-\lfloor\frac{ n_0}{2}\rfloor}}$ is an odd square-free integer. Thus our result follows  from Theorem~\ref{thm-Wang-ASimpleCriterion} and Theorem~\ref{thm-detWni}.
\end{proof}

We mention several remarks and  open problems.

\smallskip
\begin{remark}\label{prop C} {\rm In the introduction we discussed the importance of the property ${\mathcal C}$ as an extremal divisibility condition for the walk matrix of a graph. All the graphs $G_{i}$ in Theorem~\ref{thm-detWn2to-n-above2}\, now satisfy the condition ${\mathcal C}.$}\end{remark}

\begin{remark}\label{rem-odd-square-free} {\rm There are indeed many graphs $G_{0}$ that are suitable starters for such sequences. Among the $112$ connected graphs on six vertices there are $8$ controllable graphs, labelled $59, 77$ in~\cite{Cvetkovic-A-Table-V6} with $|\det(W)|=3\cdot 2^3$ and $46, 60, 67, 85, 87, 98$ in~\cite{Cvetkovic-A-Table-V6} with $|\det(W)|=2^3.$ These graphs therefore have property $\mathcal{C}$ and are controllable.
According to Theorems~\ref{thm-Wang-ASimpleCriterion}, \ref{thm-detWni} and \ref{thm-detWn2to-n-above2} we obtain infinite series of graphs based on 6 of these $8$ graphs as an initial $G_0$, namely graphs labelled $59, 77,67, 85, 87, 98$. In particular, all graphs in these series have property $\mathcal{C}$ and hence are DGS graphs.} \end{remark}

\medskip
\begin{remark}\label{rem-2tonabove2}
{\rm Mao et al.~\cite{LHMao-NewMethodDS} stated the problem of characterizing graphs with $|\det(W)|= 2^{\lfloor \frac{n}{2}\rfloor}$. This condition can be viewed
as a strengthening of condition $\mathcal{C}$
and is worth investigation independently.
By Theorems~\ref{thm-detWni} and~\ref{thm-detWn2to-n-above2} we can construct infinite families of such graphs based the controllable graphs labelled $67, 85, 87, 98$ in ~\cite{Cvetkovic-A-Table-V6}.}
\end{remark}

\medskip
\begin{remark}{\rm
Theorems \ref{thm-detbarW},~\ref{thm-Wang-ASimpleCriterion} and~\ref{thm-detWn2to-n-above2} imply that the complement of any $G_i$ constructed in Theorem \ref{thm-detWni} is also DGS, for all $i\ge 0.$}
\end{remark}

\begin{figure}[h!]
  \centering
 \includegraphics[width=14cm]{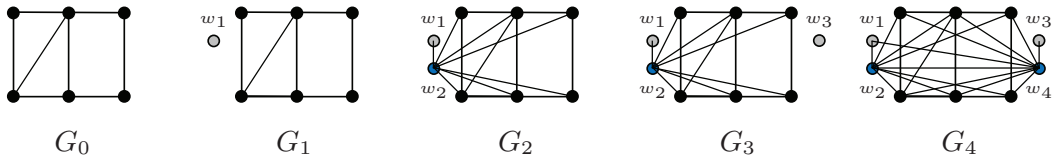}\\
  \caption{Constructing a family of DGS graphs from $G_0\cong\#67$}\label{Fig1}
\end{figure}

\medskip
\begin{remark}\label{ex-DGS}{\rm
Let $G_0$ be the graph labelled  $67$ in~\cite{Cvetkovic-A-Table-V6}, see Fig.\ref{Fig1}. We show schematically the first five DGS graphs obtained from Theorem~\ref{thm-detWni}. It is easy to verify that these graphs satisfy $|\det(W)|=\lfloor\frac{n}{2}\rfloor$.}
\end{remark}

\vspace{-5pt}

{\bf Open Problems.} We conclude  with two open problems:\\
(i)\,\,Find other constructions of graphs satisfying property $\mathcal{C}$.\\
(ii)\,Determine the generalized spectrum of graphs with property $\mathcal{C}$ and classify such graphs.


\subsection*{Acknowledgements}
The first author would like to express his gratitude to Professor Qiongxiang Huang for guiding him towards  Spectral Graph Theory and the faculty of the School of Mathematics at UEA for accepting  him into their friendly research environment.

This work is supported by the National Natural Science Foundation of China (Nos. 11401044, 11471005, 11501050), Postdoctoral Science Foundation of China (No. 2014M560754), Postdoctoral Science Foundation of Shaanxi, the Fundamental Research Funds for the Central Universities (No. 300102128201) and the Foundation of China Scholarship Council (No. 201706565015).

\end{document}